\documentclass[11pt]{amsart}
\usepackage[T1]{fontenc}
\usepackage{lmodern,microtype}
\usepackage{amsmath,amssymb,amsthm,mathtools}
\usepackage[margin=1.5in]{geometry}
\usepackage{xcolor}
\usepackage{aliascnt}
\usepackage[colorlinks=true,linkcolor=blue!45!black,citecolor=blue!45!black,urlcolor=blue!45!black,pdfusetitle]{hyperref}
\usepackage[nameinlink,noabbrev]{cleveref}
\usepackage{enumitem}
\setlist{nosep}

\newtheorem{theorem}{Theorem}[section]

\newaliascnt{prop}{theorem}

\aliascntresetthe{prop}

\newaliascnt{cor}{theorem}

\aliascntresetthe{cor}

\newaliascnt{lemma}{theorem}
\newtheorem{lemma}[lemma]{Lemma}
\aliascntresetthe{lemma}

\renewcommand{\Re}{\operatorname{Re}}
\renewcommand{\Im}{\operatorname{Im}}

\title{The sharp constant in the Mashreghi--Ransford inequality}
\author[Ludovick Bouthat]{Ludovick Bouthat}
\email{ludovick.bouthat.1@ulaval.ca}
\address{D\'epartement de math\'ematiques et de statistique, Universit\'e Laval, Qu\'ebec, QC, Canada G1V 0A4.}

\subjclass[2020]{Primary 30D15; Secondary 05A10, 30A10}

\keywords{Mashreghi--Ransford inequality, binomial transforms,
entire functions of exponential type, sharp constants,
exponential generating functions,
Phragm\'en--Lindel\"of principle}

\begin{document}

\begin{abstract}
Let $(a_n)_{n\geq0}$ be a sequence of complex numbers, and define
\[
 b_n=\sum_{k=0}^n \Big(\!\!\!\begin{array}{c}
    n \\
    k 
 \end{array} \!\!\!\Big)a_k,
 \qquad
 c_n=\sum_{k=0}^n  \Big(\!\!\!\begin{array}{c}
    n \\
    k 
 \end{array} \!\!\!\Big)(-1)^{n-k}a_k.
\]
Let $\beta>1$, put $\alpha=\sqrt{\beta^2-1}$, and suppose that $b_n,c_n=O(\beta^n)$. Mashreghi and Ransford proved that
\[
  \limsup_{n\to\infty}\frac{|a_n|}{\alpha^n} \leq \kappa\! \left(\limsup_{n\to\infty}\frac{|b_n|}{\beta^n}\right)^{\!1/2}\! \left(\limsup_{n\to\infty}\frac{|c_n|}{\beta^n}\right)^{\!1/2}
\]
with a universal constant satisfying $2/\sqrt{3} \leq \kappa\leq2$.  We prove that the optimal constant is indeed $\kappa=2/\sqrt{3}$. The proof, which uses exponential generating functions, Phragm\'en--Lindel\"of estimates, and Cauchy's formula, is compared to the classical one of Mashreghi and Ransford.
\end{abstract}
\maketitle

\section{Introduction}

Given a complex sequence $(a_n)_{n\geq0}$, consider the two binomial transforms
\begin{equation}\label{eq:transforms}
 b_n=\sum_{k=0}^n\binom{n}{k}a_k, \qquad c_n=\sum_{k=0}^n\binom{n}{k}(-1)^{n-k}a_k.
\end{equation}
If $a_n=0$ for all $n>r$, where $r$ is a non-negative integer, then it is clear that $b_n,c_n = O(n^r)$. Surprisingly, Chalendar, Kellay and Ransford \cite{Chalendar} proved the converse: \emph{If $b_n,c_n = O(n^r)$, then $a_n=0$ for all $n>r$}. In the same paper, the authors extended this result by showing that if, for some $\beta>1$, $\max\{|b_n|,|c_n|\} \leq \beta^n (n+1)^r$, then there exists a positive constant $C(r,\beta)$ such that
$$
|a_n| \leq \begin{cases}
    C(0,\beta) \alpha^{n} \bigl(1+\log(n+1)\bigr) &\qquad \text{if } r=0;\\
    C(r,\beta) \alpha^{n-r} (n+1)^r &\qquad \text{if } r\geq 1,
\end{cases}
$$
where $\alpha:=\sqrt{\beta^2-1}$. Moreover, the authors conjectured that the factor $\log(n+1)$ could be removed by using a more precise argument. 

Mashreghi and Ransford \cite{MR} resolved this conjecture five years later by proving that, for $\beta>1$ and $\alpha=\sqrt{\beta^2-1}$,
\[
 b_n,c_n=O(\beta^n) \quad\Longrightarrow\quad a_n=O(\alpha^n).
\]
More precisely, they obtained the quantitative estimate
\[
  \limsup_{{n\to\infty}} \frac{|a_n|}{\alpha^n} \leq 2 \left(\limsup_{n\to\infty}\frac{|b_n|}{\beta^n}\right)^{\!1/2} \!\left(\limsup_{n\to\infty}\frac{|c_n|}{\beta^n}\right)^{\!1/2},
\]
and they showed by example that the universal constant cannot be smaller than $2/\sqrt3$. Indeed, it suffices to consider the sequence $a_n=(i\sqrt3)^n-(-i\sqrt3)^n$ with $\beta=2$ to obtain this conclusion.

Our main result closes this gap by showing that this smaller constant is in fact optimal.

\begin{theorem}\label{theorem:main}
Let $\beta>1$, define $\alpha=\sqrt{\beta^2-1}$, and suppose that the sequences in \eqref{eq:transforms} satisfy $b_n,c_n=O(\beta^n)$. Then
\begin{equation*}
 \limsup_{n\to\infty}\frac{|a_n|}{\alpha^n} \leq \frac{2}{\sqrt{3}} \left(\limsup_{n\to\infty}\frac{|b_n|}{\beta^n}\right)^{\!1/2} \!\left(\limsup_{n\to\infty}\frac{|c_n|}{\beta^n}\right)^{\!1/2},
\end{equation*}
and the factor $2/\sqrt{3}$ is optimal as a universal constant valid for all $\beta>1$.
\end{theorem}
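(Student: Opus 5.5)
The plan is to pass to exponential generating functions. Put $A(z)=\sum_{n\ge0}a_nz^n/n!$, and define $B$ and $C$ the same way from $(b_n)$ and $(c_n)$. The binomial relations \eqref{eq:transforms} are equivalent to
\[
B(z)=e^{z}A(z),\qquad C(z)=e^{-z}A(z),
\]
so that $A=e^{-z}B=e^{z}C$ and $A(z)^2=B(z)C(z)$. Write $L_b=\limsup|b_n|/\beta^n$ and $L_c=\limsup|c_n|/\beta^n$. For each $\varepsilon>0$, the coefficient bounds give
\[
|B(z)|\le (L_b+\varepsilon)e^{\beta|z|}+K_\varepsilon,\qquad |C(z)|\le (L_c+\varepsilon)e^{\beta|z|}+K_\varepsilon .
\]
Hence, with $z=x+iy$,
\[
|A(z)|\lesssim \min\bigl(L_be^{-x},\,L_ce^{x}\bigr)e^{\beta|z|}
\qquad\text{and}\qquad
|A(z)|\lesssim \sqrt{L_bL_c}\,e^{\beta|z|}.
\]
By homogeneity we may rescale $a_n$, replacing it by $\lambda a_n$ for a suitable $\lambda$, so as to arrange $L_b=L_c$.

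\emph{Step 1: indicator bound.} First I would show that $A$ has indicator $h_A(\theta)\le\beta-|\cos\theta|$. The trigonometrically convex minorant structure then forces the sharper bound $h_A(\theta)\le\alpha|\sin\theta|$. The reason is that the sinusoid $\alpha|\sin\theta|$ touches $\beta-|\cos\theta|$ exactly where $\cos\theta=\pm1/\beta$, so it is the smallest trigonometrically convex majorant compatible with the bound.

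\emph{Step 2: quantitative Phragm\'en--Lindel\"of estimate.} The indicator bound alone loses constants, so I would next make it quantitative. I would apply Phragm\'en--Lindel\"of to $A(z)e^{\pm i\alpha z}$ in the four sectors cut out by the rays $\cos\theta=\pm1/\beta$. On those rays the bound $e^{r(\beta-|\cos\theta|)}$ coincides with $e^{\alpha r|\sin\theta|}$. On the real axis I would use the geometric-mean information coming from $A^2=BC$. The aim is a pointwise estimate
\[
|A(x+iy)|\le \bigl(K(\beta)+o(1)\bigr)\sqrt{L_bL_c}\;e^{\alpha|y|}\qquad(|z|\to\infty),
\]
with an explicit $K(\beta)$ coming from optimizing the sector-boundary data.

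\emph{Step 3: Cauchy's formula and Laplace's method.} With that bound in hand, I would apply Cauchy's formula on the circle $|z|=r=n/\alpha$, but integrate $|A|$ rather than take its supremum:
\[
|a_n|\le \frac{n!}{2\pi r^n}\int_0^{2\pi}|A(re^{i\theta})|\,d\theta .
\]
Taking the supremum instead would lose a factor $\sqrt n$. Near $\theta=\pm\pi/2$ the integrand behaves like $e^{\alpha r(1-\varphi^2/2)}$, and Laplace's method gives a contribution of about $2\sqrt{2\pi/(\alpha r)}\,e^{\alpha r}$. Away from $\pm\pi/2$ the contribution is exponentially smaller. Combining this with Stirling's formula yields
\[
\limsup_{n\to\infty}\frac{|a_n|}{\alpha^n}\le 2K(\beta)\sqrt{L_bL_c}.
\]
It then remains to check that $\sup_{\beta>1}2K(\beta)=2/\sqrt3$, attained at $\beta=2$. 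Optimality is then the example $a_n=(i\sqrt3)^n-(-i\sqrt3)^n$ with $\beta=2$, already quoted in the introduction.

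\emph{Main obstacle.} I expect Step 2 to be the hardest, namely obtaining the \emph{sharp} constant $K(\beta)$. A crude Phragm\'en--Lindel\"of argument only reproduces constants like the factor $2$ of Mashreghi--Ransford. My first attempt would be to apply the maximum principle to $A(z)^2e^{\pm2i\alpha z}=B(z)C(z)e^{\pm2i\alpha z}$ in the upper and lower half-planes. There the product structure lets the factors $e^{\pm x}$ from $B$ and $C$ cancel exactly. The remaining difficulty is to control the boundary values on the real axis and the tangency rays carefully enough that the worst case is visibly the sinusoid $\sin(\alpha z)$ with $\beta=2$.
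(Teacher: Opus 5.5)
\textbf{There is a genuine gap, in Step 2.} All of the improvement over Mashreghi--Ransford has to happen in that step, and the route you sketch cannot produce $2/\sqrt3$. Your Steps 1 and 3 are essentially their argument: a bound near $\pm i\infty$, then Cauchy's formula on $|z|=n/\alpha$ with two Laplace saddles. Step 3 turns a per-saddle constant $K$ into $2K\sqrt{L_bL_c}$, so you need $K=1/\sqrt3$.

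Every input you propose to feed into Phragm\'en--Lindel\"of is a growth bound on $B$ and $C$, or on $A^2=BC$. All of these hold with $L_b=L_c=1$ for the complex sequence $a_n=(i\alpha)^n$. There $b_n=(1+i\alpha)^n$ and $c_n=(i\alpha-1)^n$ have modulus $\beta^n$, while $A(z)=e^{i\alpha z}$ has $|A(-iy)|=e^{\alpha y}$. So any per-saddle bound derived from those data has $K\ge1$, and you are back at the constant $2$. The theorem survives because the other saddle of $e^{i\alpha z}$ contributes nothing, but a sum of two separate pointwise bounds cannot see that.

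Your concrete first attempt also fails on its own terms: $A^2e^{\pm2i\alpha z}$ need not be bounded on the real axis, where $|A(x)|$ can grow like $e^{(\beta-1)|x|}$. Nothing in the plan identifies a source for $1/\sqrt3$. There is also no $\beta$-dependent $K(\beta)$ to optimize: in the paper the per-saddle constant is $1/\sqrt3$ for every $\beta$, and $\beta=2$ enters only through the sharpness example.

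\textbf{What is missing.} First, you need to reduce to real sequences, by replacing $a_n$ with $\Re(\omega a_n)$ along a subsequence realizing the limsup. Then $|A(\bar z)|=|A(z)|$, so the two saddles are equal.

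Second, you need an estimate that uses the coefficient bound and the reality of the coefficients, not just the growth of the generating functions. The paper's key lemma states: if $(s_n)$ is real and bounded, $E(w)=\sum s_nw^n/n!$, and $e^{-t}E(te^{3i\phi})\to0$, then $\limsup_{t\to\infty}e^{-t}|E(te^{i\phi})|\le\frac{1}{\sqrt3}\limsup|s_n|$. Its proof runs as follows.
\begin{itemize}
\item With Poisson weights $w_n=e^{-t}t^n/n!$ and $h_n=\cos(n\phi+\theta)$, one has $e^{-t}\Re(e^{i\theta}E(te^{i\phi}))=\sum w_ns_nh_n$.
\item The identity $3h_n=4h_n^3-\cos(3(n\phi+\theta))$, together with the decay on the ray $3\phi$, gives $3e^{-t}|E(te^{i\phi})|\le4|\sum w_ns_nh_n^3|+o(1)$.
\item Cauchy--Schwarz with $\sum w_nh_n^2\to\frac12$ and $\sum w_nh_n^4\to\frac38$ then bounds the right side by $\sqrt3+o(1)$.
\end{itemize}

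The lemma is applied with $\sin\phi=\alpha/\beta$ to $G(w)=B(w/\beta)$ and $H(w)=C(-w/\beta)$. The required decay on the ray $3\phi$ comes from a sectorial Phragm\'en--Lindel\"of argument using $e^{-2w/\beta}G(w)=H(-w)$. This gives the bounds $L_b/\sqrt3$ and $L_c/\sqrt3$ on the rays $\arg w=\pm\phi$. These are the boundary values of $\Psi(w)=e^{-\alpha w/\beta}A(-iw/\beta)$ on a sector of half-opening $\pi/2-\phi$ about the saddle direction. A two-constants Phragm\'en--Lindel\"of lemma then gives $\sqrt{L_bL_c}/\sqrt3$ at each saddle.

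That two-constants step is also what handles $L_b\ne L_c$. Your rescaling cannot do this, since $a_n\mapsto\lambda a_n$ multiplies $L_b$ and $L_c$ by the same factor $|\lambda|$.

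\textbf{Step 1 is false as stated.} Take $a_n=(\beta-1)^n$: then $b_n=\beta^n$ and $c_n=(\beta-2)^n$, but $h_A(0)=\beta-1>0$. Trigonometric convexity only yields $h_A(\theta)\le\alpha|\sin\theta|$ where $|\cos\theta|\le1/\beta$. That is enough for Step 3, since elsewhere $\beta-|\cos\theta|\le\alpha^2/\beta<\alpha$. For the same reason, your Step 2 bound can only hold near $\pm i\infty$.
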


The paper is organized as follows. In \Cref{sec - 2}, we reduce the proof to real sequences, normalize the two binomial transforms, and introduce the exponential generating functions used throughout the proof. In \Cref{sec - 3}, we revisit the proof of Mashreghi and Ransford, isolating the estimate responsible for the constant $2$ and identifying the point at which it can be improved. \Cref{sec - 4} is devoted to the main new ingredient: a sharper estimate for exponential generating functions, from which the factor $2/\sqrt{3}$ emerges. Finally, in \Cref{sec - 5}, a sectorial Phragm\'en--Lindel\"of argument allows us to complete the proof of \Cref{theorem:main}.

\section{Reduction to real sequences and normalization}\label{sec - 2}

We first observe that it suffices to prove \Cref{theorem:main} for real sequences. Indeed, for an admissible complex sequence, put
\[
 A_0:=\limsup_{n\to\infty}\frac{|a_n|}{\alpha^n}.
\]
If $A_0=0$, the conclusion is immediate. Otherwise, choose a subsequence $(n_j)$ such that $|a_{n_j}|/\alpha^{n_j}\to A_0$, allowing $A_0=\infty$. Passing to a further subsequence, we may choose $|\omega|=1$ so that $\omega a_{n_j}/|a_{n_j}|\to1$. The real sequence $a'_n:=\Re(\omega a_n)$ then satisfies
\[
 \limsup_{n\to\infty}\frac{|a'_n|}{\alpha^n}=A_0.
\]
Its binomial transforms are $b'_n=\Re(\omega b_n)$ and $c'_n=\Re(\omega c_n)$, whose normalized modulus limsups are no larger than those of $b_n$ and $c_n$. The result for real sequences therefore implies the complex case with the same constant. Henceforth we assume that $(a_n)$ is real.

Define
\[
 p:=\frac\alpha\beta \qquad\text{and}\qquad q:=\frac1\beta.
\]
Then $p,q>0$ and $p^2+q^2=1$.  Choose $0<\phi<\pi/2$ so that $p=\sin\phi$ and $q=\cos\phi$. Define
\[
 x_n:=\frac{a_n}{\alpha^n}, \qquad y_n:=\frac{b_n}{\beta^n}, \qquad z_n:= (-1)^n \frac{c_n}{\beta^n},
\]
and introduce the respective exponential generating functions
\begin{equation}\label{eq - def_func}
 F(w)=\sum_{n\geq0}x_n\frac{w^n}{n!},
 \qquad
 G(w)=\sum_{n\geq0}y_n\frac{w^n}{n!},
 \qquad
 H(w)=\sum_{n\geq0}z_n\frac{w^n}{n!}.
\end{equation}
By definition,
\[
 \limsup_{n\to\infty}|z_n| = \limsup_{n\to\infty}\frac{|c_n|}{\beta^n}.
\]
The hypothesis implies that $(y_n)$ and $(z_n)$ are bounded, so $G$ and $H$ are entire functions of exponential type at most one. Binomial inversion gives
\[
 a_n=\sum_{k=0}^n\binom nk(-1)^{n-k}b_k.
\]
Thus $|a_n|\le M(1+\beta)^n$ for some $M>0$, and consequently $F$ is entire and of finite exponential type. Moreover, the binomial theorem gives
\begin{equation}\label{eq:EGF}
 G(w)=e^{qw}F(pw) \qquad\text{and}\qquad H(w)=e^{qw}F(-pw).
\end{equation}
Eliminating $F$ in \eqref{eq:EGF} allows us to relate the two functions via the identities
\begin{equation}\label{eq:GH}
 e^{-2qw}G(w)=H(-w) \qquad\text{and}\qquad e^{-2qw}H(w)=G(-w).
\end{equation}
Since the coefficients are real, we further have
\begin{equation}\label{eq:real-symmetry}
 F(\overline w)=\overline{F(w)},\qquad
 G(\overline w)=\overline{G(w)},\qquad
 H(\overline w)=\overline{H(w)}.
\end{equation}
Moreover, since $(y_n)$ and $(z_n)$ are bounded, there exists $M>0$ such that
\begin{equation}\label{eq:GH-growth}
 |G(w)|\leq M e^{|w|}
 \qquad\text{and}\qquad
 |H(w)|\leq M e^{|w|}
 \qquad(w\in\mathbb C).
\end{equation}
These identities and bounds will be crucial to establishing decaying properties needed in \Cref{sec - 4}.

\section{The original argument}\label{sec - 3}

We recall the proof of \cite[Theorem~1.7]{MR} in a form that isolates the estimate to be improved. The first lemma is \cite[Lemma~5.1]{MR}; the second packages the coefficient-extraction step of their proof.

\begin{lemma}\label{lem:MR-PL}
Let $u$ be a subharmonic function on $\mathbb C$. Suppose that $\beta>1$ and $\gamma_+, \gamma_- \in\mathbb R$ satisfy
\[
u(z) \leq \beta|z|\pm\Re(z)+\gamma_{\pm}+o(1) \qquad (|z|\to\infty)
\]
Define $\alpha=\sqrt{\beta^2-1}$ and $T := \bigl\{z\in\mathbb C: |\Re z|<|z|^{3/4}\bigr\}.$ Then, as $|z|\to\infty$, 
\[
u(z) \leq \begin{cases}
    \alpha|\Im z|+\frac{\gamma_++\gamma_-}{2}+o(1) &\quad\text{if }z\in T,\\[3pt]
    \alpha\bigl(|z|-\tfrac{1}{2} |z|^{1/2}\bigr)+O(1) &\quad\text{if }z\notin T.
\end{cases}
\]
\end{lemma}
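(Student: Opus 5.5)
We would prove \Cref{lem:MR-PL} by comparing $u$ with explicit harmonic majorants in the upper and lower half-planes and applying the Phragm\'en--Lindel\"of principle. By the symmetry $z\mapsto\overline z$, which preserves both hypotheses, it suffices to treat $\Im z\ge0$.

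\textbf{Geometry of the bound.} Taking the minimum of the two hypotheses gives
\[
u(z)\le \beta|z|-|\Re z|+\max(\gamma_+,\gamma_-)+o(1).
\]
Write $z=re^{i\theta}$ with $0\le\theta\le\pi$. Then
\[
\beta-|\cos\theta|\ \ge\ \alpha\sin\theta,
\]
because $|\cos\theta|+\alpha\sin\theta\le\sqrt{1+\alpha^2}=\beta$. Equality holds exactly on the two rays $\theta_0=\arctan\alpha$ and $\pi-\theta_0$, and the gap grows quadratically in $\theta-\theta_0$ away from these rays. So the harmonic function $\alpha\,\Im z$ lies below the hypotheses up to constants and is tangent to them along two rays. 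The function $v(z)=u(z)-\alpha\Im z$ is subharmonic in the upper half-plane, and the task is to show that $v$ is bounded there, with the sharp constant near the imaginary axis.

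\textbf{Step 1: Phragm\'en--Lindel\"of in sectors.} Split the upper half-plane into the sector $\Sigma_+=\{0<\theta<\pi/2\}$ and its mirror $\Sigma_-$. In $\Sigma_+$ use the hypothesis with $-\Re z$. On the positive imaginary axis it gives $v\le(\beta-\alpha)r+\gamma_-$. On the real axis, however, $v$ is only $O(r)$, so a linear harmonic comparison alone does not give boundedness. The remedy is to also use the tangency at $\theta_0$: because $v\le\gamma_\pm+o(1)$ on the ray $\theta_0$ itself, one applies Phragm\'en--Lindel\"of in the two sub-sectors $\{0<\theta<\theta_0\}$ and $\{\theta_0<\theta<\pi/2\}$. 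Each of these has opening less than $\pi/2$, and $v$ is of order at most one there while being bounded on the boundary rays up to the linear terms controlled by the other hypothesis. This yields $v=O(1)$ in the upper half-plane, and hence $u\le\alpha|\Im z|+O(1)$ everywhere.

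\textbf{Step 2: the sharp constant in $T$.} Inside the parabolic-type region $T$, which is within angle $o(1)$ of $\pm i\infty$, we want the constant $\tfrac{\gamma_++\gamma_-}{2}$. The idea is that near the imaginary axis the two hypotheses, restricted to the rays $\theta_0$ and $\pi-\theta_0$, give $v\le\gamma_+$ on one ray and $v\le\gamma_-$ on the other. By Step 1, $v$ is bounded subharmonic in the sector between these rays. Its harmonic majorant is then the harmonic measure interpolation between $\gamma_+$ and $\gamma_-$. On the bisector this equals $\tfrac{\gamma_++\gamma_-}{2}$, and it tends to that value uniformly on sets where $\theta-\pi/2\to0$, which includes $T$.

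\textbf{Step 3: the improved bound off $T$.} For $z\notin T$ the gain $-\tfrac12\alpha|z|^{1/2}$ comes from the quadratic gap $\beta-|\cos\theta|-\alpha\sin\theta\asymp(\theta-\theta_0)^2$. This must be fed into a harmonic majorant of the form $\alpha\Im z-c\,\Re\bigl(\sqrt{\,\cdot\,}\bigr)$, suitably rotated, which is built from $\Im z$ and $|z|^{1/2}$-type harmonic functions on a slit or sector domain. I expect this step to be the main obstacle: choosing the auxiliary harmonic function so that it is dominated by the hypotheses on the boundary while producing exactly the coefficient $\tfrac12$ and the threshold $|\Re z|=|z|^{3/4}$. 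My first attempt would be to compare $v$ with $-\tfrac{\alpha}{2}\Re\bigl(e^{-i\pi/4}\sqrt{z}\bigr)$-type functions on half-planes $\{\Re z>|z_0|^{3/4}\}$. I would then tune the constants so that the boundary inequality on $\partial T$ matches Step 2.
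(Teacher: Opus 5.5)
Your Step~1 asserts that $v=u-\alpha\Im z$ is bounded above on the whole upper half-plane, i.e.\ $u\le\alpha|\Im z|+O(1)$ everywhere. This is false. Consider
\[
u(z)=\max\bigl\{\alpha|\Im z|,\,(\beta-1)|\Re z|\bigr\}.
\]
It is subharmonic, being a maximum of four linear harmonic functions. It satisfies both hypotheses with $\gamma_\pm=0$, since $\alpha|\sin\theta|+|\cos\theta|\le\beta$ and $(\beta-1)|\cos\theta|+|\cos\theta|\le\beta$. Yet $u(r)=(\beta-1)r$ for $r>0$. This even happens in the paper's setting: $a_n=(\beta-1)^n+(1-\beta)^n$ has $B=C=1$ and $F(z)=2\cosh\bigl((\beta-1)z/\alpha\bigr)$.

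Splitting into sub-sectors cannot help. The real axis remains a boundary ray of $\{0<\theta<\theta_0\}$, and the imaginary axis a boundary ray of $\{\theta_0<\theta<\pi/2\}$. On both rays $v$ is only $O(r)$, so Phragm\'en--Lindel\"of returns a linear majorant, not a bounded one. Consequently Step~2, which takes boundedness of $v$ between the tangency rays ``by Step~1'', is unsupported as written.

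The repair is to apply Phragm\'en--Lindel\"of directly in the single sector $S=\{\theta_0<\arg z<\pi-\theta_0\}$. Both of its edges are tangency rays, where $v\le\gamma_-+o(1)$ and $v\le\gamma_++o(1)$ respectively. Its opening $\pi-2\theta_0$ is less than $\pi$, and $v=O(|z|)$ there. This makes $v$ bounded in $S$, after which your harmonic-measure interpolation between $\gamma_-$ and $\gamma_+$ correctly gives the estimate in $T$.

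Step~3 is left open, and the obstacle you anticipate is not real. The term $-\tfrac12|z|^{1/2}$ has nothing to do with the quadratic gap at $\theta_0$, and no $\sqrt z$-type majorant is needed. If $|\Re z|\ge|z|^{3/4}$, then
\[
|\Im z|=|z|\sqrt{1-|\Re z|^2/|z|^2}\le|z|\bigl(1-\tfrac12|z|^{-1/2}\bigr)=|z|-\tfrac12|z|^{1/2}.
\]
So inside $S$ and its reflection, the bound $u\le\alpha|\Im z|+O(1)$ already gives the estimate off $T$.

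The complementary sectors $|\arg(\pm z)|\le\theta_0$ are exactly where $u\le\alpha|\Im z|+O(1)$ may fail. There the hypothesis alone gives $u\le(\beta-\cos\theta_0)|z|+O(1)=\tfrac{\alpha^2}{\beta}|z|+O(1)$. Since $\alpha^2/\beta<\alpha$, this lies below $\alpha\bigl(|z|-\tfrac12|z|^{1/2}\bigr)$ for large $|z|$.

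For comparison, the paper does not prove this lemma itself; it quotes it as Lemma~5.1 of Mashreghi--Ransford.
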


By hypothesis, $B := \limsup_{n\to\infty} |y_n| < \infty$. Hence, for every $\varepsilon>0$, choose $N$ such that $|y_n|\leq B+\varepsilon$ for $n\geq N$. Then, uniformly in $\arg z$,
\[
  |G(z)| \leq \sum_{n<N}|y_n|\frac{|z|^n}{n!} +(B+\varepsilon)\sum_{n\geq N}\frac{|z|^n}{n!} \leq o\bigl(e^{|z|}\bigr)+(B+\varepsilon)e^{|z|},
\]
and letting $\varepsilon\to 0$ yields $|G(z)|\leq (B+o(1))e^{|z|}$. Since $C := \limsup_{n\to\infty} |z_n|<\infty$, applying the same method also gives the bound $|H(z)|\leq (C+o(1))e^{|z|}$.

By \eqref{eq:EGF},
\[
 F(z)=e^{-z/\alpha}G(z/p) \qquad\text{and}\qquad F(z)=e^{z/\alpha}H(-z/p).
\]
Combining these identities with the preceding growth estimates while defining $u(z):=\alpha\log|F(z)|$ gives precisely the hypotheses of \Cref{lem:MR-PL} with $\gamma_-=\alpha\log B$ and $\gamma_+=\alpha\log C$, if $B,C>0$. If either constant is zero, apply the same argument with $B+\varepsilon$ and $C+\varepsilon$ and then let $\varepsilon\downarrow0$. Therefore, as $|z|\to\infty$,
\begin{equation}\label{eq:F-tube}
    |F(z)| \leq \begin{cases}
    \bigl(\sqrt{BC}+o(1)\bigr)e^{|\Im z|} &\quad\text{if }|\Re z| < |z|^{3/4},\\[3pt]
    e^{|z|-\frac{1}{2}|z|^{1/2}+O(1)} &\quad\text{if }|\Re z| \geq |z|^{3/4}.
    \end{cases}
\end{equation}

As a consequence of \eqref{eq:F-tube}, we have the following lemma, which is implicit in the proof of \cite[Theorem~1.7]{MR}. This result expresses the fact that, when estimating $|x_n|$ using Cauchy's formula, only the neighborhoods of the two saddle points $in$ and $-in$ contribute asymptotically. For a general account of saddle-point methods for generating functions, see \cite[Chapter~VIII]{FS}.

\begin{lemma}\label{lem:two-saddle-extraction}
Let $F$ be as defined in \eqref{eq - def_func} and $\eta_n := \arcsin\bigl(n^{-1/4}\bigr)$. Suppose that, for some $K\geq0$,
\begin{equation*}
  \sup_{|\theta|\leq\eta_n} e^{-n\cos\theta} \bigl( |F(-ine^{i\theta})| + |F(ine^{-i\theta})|\bigr) \leq K+o(1).
\end{equation*}
Then
\[
\limsup_{n\to\infty}|x_n| \leq K.
\]
\end{lemma}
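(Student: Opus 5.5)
Write $x_n = \frac{n!}{2\pi i}\oint_{|w|=n} F(w)\,w^{-n-1}\,dw$, so with $w=ne^{it}$ we have
\[
|x_n|\le \frac{n!}{2\pi n^n}\int_{-\pi}^{\pi}|F(ne^{it})|\,dt .
\]
Stirling gives $n!/n^n=\sqrt{2\pi n}\,e^{-n}(1+o(1))$. We split the circle into three pieces: the two arcs $|t\mp\pi/2|\le\eta_n$, where $ne^{it}=\pm in e^{\mp i\theta}$, and the rest of the circle. The saddle arcs must produce exactly $K$, and the remainder must contribute $o(1)$. Only \eqref{eq:F-tube} is used for the second part, together with the hypothesis of the lemma for the first.

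\emph{Remainder.} Take $w=ne^{it}$ with $|t\mp\pi/2|>\eta_n$. Then $|\cos t|>\sin\eta_n=n^{-1/4}$, so $|\Re w|>n^{3/4}=|w|^{3/4}$. The second case of \eqref{eq:F-tube} gives $|F(w)|\le e^{n-\frac12 n^{1/2}+O(1)}$. After multiplying by $\sqrt{n}\,e^{-n}$ and integrating over a set of length at most $2\pi$, this piece is $O(\sqrt n\,e^{-\sqrt n/2})=o(1)$.

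\emph{Saddle arcs.} On the arc near $+in$ write $w=ine^{-i\theta}$ with $|\theta|\le\eta_n$; the other arc is symmetric. The hypothesis gives $|F(w)|\le (K+o(1))e^{n\cos\theta}$ uniformly in $\theta$. Hence the contribution of both arcs together is at most
\[
(K+o(1))\,\frac{\sqrt{2\pi n}\,e^{-n}}{2\pi}\int_{-\eta_n}^{\eta_n}e^{n\cos\theta}\,d\theta .
\]
Here the hypothesis bounds the \emph{sum} of the two arc values, so one copy of this integral suffices, and pairing $\theta$ on both arcs is exactly how the hypothesis is phrased.

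Laplace's method gives $\int_{-\eta_n}^{\eta_n}e^{n\cos\theta-n}\,d\theta\le\int_{\mathbb R}e^{-n\theta^2/2+O(n\theta^4)}\,d\theta$. On $|\theta|\le\eta_n\sim n^{-1/4}$ the term $n\theta^4$ is $O(1)$, so this needs care. The cleanest route is the inequality $\cos\theta\le 1-\theta^2/2+\theta^4/24$ combined with dominated convergence after the substitution $\theta=s/\sqrt n$. On $|s|\le n^{1/4}$ the integrand $e^{-s^2/2+s^4/(24n)}\le e^{-s^2/2+s^2/24}$ is dominated, and it converges pointwise to $e^{-s^2/2}$. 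The integral is therefore $\sqrt{2\pi/n}\,(1+o(1))$, and the total bound becomes $(K+o(1))\cdot\frac{\sqrt{2\pi n}}{2\pi}\sqrt{2\pi/n}=K+o(1)$.

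Adding the two pieces and letting $n\to\infty$ yields $\limsup|x_n|\le K$.

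The main obstacle is this uniform Laplace estimate on the shrinking-but-not-too-small window $\eta_n\approx n^{-1/4}$. The window is just wide enough that its complement lies in the non-tube region of \eqref{eq:F-tube}, where the $e^{-\frac12\sqrt n}$ gain kills the polynomial factor $\sqrt n$. At the same time it is narrow enough that the quartic correction is controlled by domination.
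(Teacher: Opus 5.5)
Your proof is correct and follows the paper's argument essentially step for step. You use Cauchy's formula on $|w|=n$, the same split into the two arcs of half-width $\eta_n$ around $\pm in$ (the paper's $T_n$) and their complement, and you kill the complement with the second case of \eqref{eq:F-tube} and Stirling. The only difference is the last step: you run a Laplace estimate on the shrinking window, whereas the paper simply enlarges the positive integral to $[-\pi,\pi]$ and uses $\frac{1}{2\pi}\int_{-\pi}^{\pi}e^{n\cos\theta}\,d\theta\sim e^n/\sqrt{2\pi n}$, so the ``main obstacle'' you identify is avoidable (and note that $\sqrt n\,\eta_n$ slightly exceeds $n^{1/4}$, which your domination bound still tolerates).
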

\begin{proof}
Let $T_n:= \bigl\{ t\in[-\pi,\pi]: ne^{it} \in T \bigr\}$, where $T$ is defined in \Cref{lem:MR-PL}. By Cauchy's formula,
\begin{align*}
    |x_n| &= \biggl|\frac{n!}{2\pi n^n} \int_{-\pi}^{\pi} F(ne^{it})e^{-int} \,\mathrm{d}t \biggr| \\
    &\leq\, \frac{n!}{2\pi n^n} \int_{[-\pi,\pi]\setminus T_n}|F(ne^{it})| \,\mathrm{d}t +  \frac{n!}{2\pi n^n} \int_{T_n}|F(ne^{it})| \,\mathrm{d}t\\
    &= \frac{n!}{2\pi n^n} \int_{[-\pi,\pi]\setminus T_n}|F(ne^{it})| \,\mathrm{d}t + \frac{n!}{2\pi n^n} \int_{-\eta_n}^{\eta_n} \left( |F(-ine^{it})| + |F(ine^{-it})|\right) \mathrm{d}t.
\end{align*}
We claim that the first term vanishes as $n\to\infty$. Indeed, on $[-\pi,\pi]\setminus T_n$, \eqref{eq:F-tube} ensures that
\begin{align*}
    \int_{[-\pi,\pi]\setminus T_n} |F(ne^{it})|\,\mathrm{d}t &\leq \int_{[-\pi,\pi]\setminus T_n} e^{n-\frac{1}{2}\sqrt{n}+O(1)} \,\mathrm{d}t \leq 2\pi e^{n-\frac{1}{2}\sqrt{n}+O(1)}.
\end{align*}
Hence, it follows from Stirling's formula that
\[
  \frac{n!}{2\pi n^n} \int_{[-\pi,\pi]\setminus T_n} |F(ne^{it})|\,\mathrm{d}t \leq \frac{n!e^{n-\frac{1}{2}\sqrt{n}+O(1)}}{n^n} \asymp \sqrt{\frac{2\pi n}{e^{\sqrt{n}}}} \longrightarrow 0.
\]
Consequently, the hypothesis implies that
\[
  |x_n| \leq (K+o(1)) \frac{n!}{2\pi n^n} \int_{-\pi}^{\pi}e^{n\cos t}\,\mathrm{d}t +o(1).
\]
Stirling's formula and elementary computations then give
\[
  \frac{n!}{n^n} \cdot \frac{1}{2\pi} \int_{-\pi}^{\pi}e^{n\cos\theta}\,\mathrm{d}\theta \sim \frac{\sqrt{2\pi n} n^n e^{-n}}{n^n} \cdot \frac{e^n}{\sqrt{2\pi n}} = 1,
\]
and the conclusion follows.
\end{proof}

This lemma allows us to easily complete the original proof of Mashreghi--Ransford. Using \eqref{eq:F-tube}, we find that
\[
  |F(\pm ine^{\mp i\theta})| \leq \bigl(\sqrt{BC}+o(1)\bigr)e^{n\cos\theta}, \qquad (n\to\infty),
\]
uniformly for $|\theta|\leq\eta_n$. Adding these two inequalities gives, as $n\to\infty$,
\begin{equation}\label{eq:old-paired-bound}
  |F(-ine^{i\theta})|+|F(ine^{-i\theta})| \leq \bigl(2\sqrt{BC}+o(1)\bigr)e^{n\cos\theta}.
\end{equation}
\Cref{lem:two-saddle-extraction}, with $K=2\sqrt{BC}$, therefore yields
\[
\limsup_{n\to\infty}|x_n| \leq 2\sqrt{BC}.
\]
Since $x_n=a_n/\alpha^n$, this is precisely the estimate obtained in \cite[Theorem 1.7]{MR}.

\medskip
The factor $2$ in \eqref{eq:old-paired-bound} comes from the direct estimation in \Cref{eq:F-tube}. We shall instead combine \Cref{lem:edge} with \Cref{lem:PL} to prove
\begin{equation}\label{eq:new-paired-bound}
  |F(-ine^{i\theta})|+|F(ine^{-i\theta})| \leq \left(\frac{2}{\sqrt3}\sqrt{BC}+o(1)\right)e^{n\cos\theta}
\end{equation}
uniformly for $|\theta|\leq\eta_n$. Applying the unchanged \Cref{lem:two-saddle-extraction}, now with $K=\frac{2}{\sqrt3}\sqrt{BC}$, will give the sharp inequality
\[
\limsup_{n\to\infty}\frac{|a_n|}{\alpha^n} \leq \frac{2}{\sqrt3}\sqrt{BC}.
\]
Hence, the main goal for the remainder of this paper is to prove the improved bound \eqref{eq:new-paired-bound}.

\section{An improved estimate}\label{sec - 4}

This section is dedicated to proving \Cref{lem:edge}, which is used to obtain the improved estimate
\[
  |F(-ine^{i\theta})|+|F(ine^{-i\theta})| \leq \left( \frac{2}{\sqrt3}\sqrt{BC}+o(1) \right)e^{n\cos\theta}.
\]
To do so, we begin with a general lemma on the decay rate of an arbitrary exponential generating function with real coefficients along the ray of argument $\phi$ under a decay assumption on the special ray of argument $3\phi$. 

\begin{lemma}\label{lem:edge}
Let $0<\phi<\pi/2$, let $(s_n)_{n\geq0}$ be a bounded real sequence, and define $E(w)=\sum_{n\geq0}s_n\frac{w^n}{n!}.$ Suppose that
\begin{equation}\label{eq:edge-decay}
 e^{-t}E(te^{3i\phi})\longrightarrow0 \qquad (t\to\infty).
\end{equation}
Then
\[
 \limsup_{t\to\infty}e^{-t} |E(te^{i\phi})| \leq \frac{1}{\sqrt3}\limsup_{n\to\infty}|s_n|.
\]
\end{lemma}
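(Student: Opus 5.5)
The plan is to reduce the statement to an $L^1$ approximation problem for trigonometric polynomials. Put $S:=\limsup_n|s_n|$. Since $s_n$ is real, \eqref{eq:edge-decay} together with conjugation also gives $e^{-t}E(te^{-3i\phi})\to0$. For a fixed large $t$, pick a unimodular $u$ with $|E(te^{i\phi})|=\Re\bigl(uE(te^{i\phi})\bigr)$, so that
\[
 e^{-t}|E(te^{i\phi})| = \sum_{n\ge0} s_n\, p_n(t)\,\Re\bigl(ue^{in\phi}\bigr), \qquad p_n(t):=e^{-t}\frac{t^n}{n!}.
\]
The Poisson weights $p_n(t)$ are concentrated on the window $|n-t|\lesssim\sqrt t$. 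The idea is to subtract, at negligible cost, real combinations of the decaying quantities $e^{-r}E(re^{\pm3i\phi})$ with $r$ close to $t$. This replaces $\Re(ue^{in\phi})$ by
\[
 \Re(ue^{in\phi})-\Re\bigl(\lambda e^{3in\phi}\bigr)
\]
for a suitable complex $\lambda$. Since the $s_n$ are real and bounded by $S+o(1)$ on the window, the result is
\[
 e^{-t}|E(te^{i\phi})|\le (S+o(1))\sum_n p_n(t)\,\bigl|\Re(ue^{in\phi})-\Re(\lambda e^{3in\phi})\bigr| + o(1).
\]

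Concretely, I would take $\lambda$ of the form $\lambda = u^{3}\mu$ with $\mu$ real. This makes the bracket equal to $\cos x-\mu\cos 3x$ with $x=n\phi+\arg u$. I would then choose $\mu$ to minimise the mean of $|\cos x-\mu\cos3x|$ over a period. Using $\cos3x=4\cos^3x-3\cos x$, the problem becomes minimising $\frac1\pi\int_0^\pi|(1+3\mu)\cos x-4\mu\cos^3x|\,dx$ over $\mu$. I expect the optimum to sit where the sign changes of $\cos x-\mu\cos3x$ are balanced, as in Chebyshev-type $L^1$ best approximation. The target is that the minimal mean equals $1/\sqrt3$, which I would verify by an explicit elementary integral. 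This value is consistent with the extremal example $a_n=(i\sqrt3)^n-(-i\sqrt3)^n$, $\beta=2$, for which $\phi=\pi/3$.

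Next I would pass from the Poisson average to the uniform average in $x$. If $\phi/\pi$ is irrational, equidistribution of $n\phi$ on windows of length $\asymp\sqrt t$, combined with the slow variation of $p_n(t)$ across such windows, gives convergence of the weighted sum to the mean. If $\phi/\pi$ is rational, the average is over a finite cyclic group, and I would check that the same $\mu$ still gives at most $1/\sqrt3$. Alternatively, the rational case follows from a continuity argument in $\phi$.

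The step I expect to be the main obstacle is justifying the subtraction itself. The hypothesis controls only $E(re^{\pm3i\phi})$, which carries the weights $p_n(r)$ rather than $p_n(t)$. The mismatch has to be absorbed by averaging $e^{-r}E(re^{3i\phi})$ over $r\in[t-C\sqrt t,\,t+C\sqrt t]$ against a smooth kernel, with the kernel chosen so that the resulting effective weight reproduces $p_n(t)$ up to an $o(1)$ error in $\ell^1$. What I would try first is a Gaussian kernel in $r$ with width $\varepsilon\sqrt t$ and $\varepsilon\to0$ at the end. The averaged weight then remains close to $p_n(t)$ in $\ell^1$, while the averaged quantity still tends to zero by \eqref{eq:edge-decay}.
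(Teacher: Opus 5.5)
Your reduction is the same as the paper's: subtract a real multiple of $e^{-t}E(te^{3i\phi})$ and bound the remainder by $S\sum_n p_n(t)\,|\cos x_n-\mu\cos 3x_n|$. Two side corrections first.

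\begin{itemize}
\item The step you flag as the main obstacle is not one. At $r=t$ one has $e^{-t}E(te^{3i\phi})=\sum_n p_n(t)s_ne^{3in\phi}$, with exactly the same Poisson weights, so no averaging in $r$ is needed.
\item The minimal mean is not $1/\sqrt3$. The choice $\mu=-1/3$ turns the bracket into $\frac43\cos^3x$, whose mean is $\frac{16}{9\pi}\approx0.566$. The true $L^1$ optimum has zeros at $7\pi/18,\ \pi/2,\ 11\pi/18$ in $(0,\pi)$, occurs at $\mu\approx-0.395$, and equals $\frac2\pi\bigl(2\sin\frac{7\pi}{18}-1\bigr)\approx0.560$. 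Both values are below $1/\sqrt3\approx0.577$.
\end{itemize}

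For irrational $\phi/\pi$ this only helps you, since you get a stronger bound.

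The genuine gap is the rational case, and that is exactly where the lemma is sharp. Take $\phi=\pi/3$ and $s_n=2\sin(n\pi/3)$. Then $S=\sqrt3$, the hypothesis holds because $|E(-t)|\le 2e^{-t/2}$, and $e^{-t}|E(te^{i\phi})|\to1=S/\sqrt3$. At $\phi=\pi/3$ the Poisson average tends to an average over six points. For the phase $\theta=\pi/6+\epsilon$ it equals
\[
\tfrac13\bigl(\sqrt3\cos\epsilon+|\sin\epsilon+\mu\sin3\epsilon|\bigr),
\]
which exceeds $1/\sqrt3$ for small $\epsilon\neq0$ unless $\mu=-1/3$. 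So reusing your $L^1$-optimal $\mu$ fails here.

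A continuity argument in $\phi$ cannot rescue this. The hypothesis is tied to the single ray of argument $3\phi$, so you cannot vary $\phi$ for a fixed $E$. Moreover, the best constant is genuinely discontinuous in $\phi$: by your own argument it is at most about $0.560$ for irrational $\phi/\pi$, yet it equals $1/\sqrt3$ at $\phi=\pi/3$.

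What is missing is a bound valid for every orbit structure of $n\phi \bmod 2\pi$ at once. The paper gets it by taking $\mu=-1/3$ (the identity $3\cos x=4\cos^3x-\cos3x$) and applying Cauchy--Schwarz:
\[
\sum_n p_n(t)|\cos x_n|^3\le\Bigl(\sum_n p_n(t)\cos^2x_n\Bigr)^{1/2}\Bigl(\sum_n p_n(t)\cos^4x_n\Bigr)^{1/2}.
\]
The right side involves only the frequencies $2\phi$ and $4\phi$. Since $\sum_n p_n(t)e^{ikn\phi}=e^{t(e^{ik\phi}-1)}\to0$ for $k=2,4$ whenever $0<\phi<\pi/2$, the two factors tend to $1/2$ and $3/8$ uniformly in the phase. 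This gives $\frac43\cdot\frac{\sqrt3}{4}=\frac1{\sqrt3}$ with no equidistribution and no case split between rational and irrational $\phi$.
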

\begin{proof}
Let $L:=\limsup_{n\to\infty}|s_n|$. For every $\varepsilon>0$, setting finitely many coefficients equal to zero gives a sequence bounded in modulus by $L+\varepsilon$. This changes $E$ only by a polynomial, preserving \eqref{eq:edge-decay} and leaving the normalized limsup in the conclusion unchanged. Dividing by $L+\varepsilon$ and then letting $\varepsilon\downarrow0$, it therefore suffices to prove the bound $1/\sqrt3$ under the assumption $|s_n|\leq1$. Now, for each $k\geq1$, define
\[
w_n=e^{-t}\frac{t^n}{n!}, \qquad h_{n,k}:=\cos\bigl(k(n\phi+\theta)\bigr),
\]
and observe that we have
\begin{equation}\label{eq:hk-E}
  \sum_{n\geq 0} w_n s_n h_{n,k} = e^{-t}\Re\!\left( e^{ik\theta}E(te^{ik\phi}) \right).
\end{equation}
For $k=1$, we may choose $\theta$ (possibly depending on $\phi$ and $t$) so that
\[
  \Bigg| \sum_{n\geq 0} w_n s_n h_{n,1} \Bigg| = e^{-t} |E(te^{i\phi})|.
\]
Hence, we seek to bound the left-hand side uniformly in $\theta$. To do so, we relate $e^{-t} E(te^{i\phi})$ to $e^{-t} E(te^{3i\phi})$ via the trigonometric identity $4h_{n,1}^3=3h_{n,1}+h_{n,3}$ to obtain
\begin{align*}
    3e^{-t} |E(te^{i\phi})| &= 3\,\Bigg| \sum_{n\geq 0} w_n s_n h_{n,1} \Bigg| =  \Bigg|4\sum_{n\geq 0} w_n s_n h_{n,1}^3 - \sum_{n\geq 0} w_n s_n h_{n,3}\Bigg| \\
    &= \Bigg| 4\sum_{n\geq 0} w_n s_n h_{n,1}^3 - e^{-t}\Re\!\left( e^{3i\theta}E(te^{3i\phi})\right)\Bigg| \\
    &= 4 \,\Bigg|\sum_{n\geq 0} w_n s_n h_{n,1}^3\Bigg| + o(1),
\end{align*}
where we used \eqref{eq:edge-decay} and \eqref{eq:hk-E}. Since $|s_n|\leq 1$, Cauchy--Schwarz yields
\begin{equation}\label{eq - CS}
\begin{aligned}
    3e^{-t} |E(te^{i\phi})|  &\leq 4\Bigg( \sum_{n\geq 0} w_n |s_n|^2 h_{n,1}^2 \Bigg)^{\!\!1/2}\! \Bigg( \sum_{n\geq 0} w_n h_{n,1}^4 \Bigg)^{\!\!1/2}\! +o(1) \\
    &\leq 4\Bigg( \sum_{n\geq 0} w_n h_{n,1}^2 \Bigg)^{\!\!1/2}\! \Bigg( \sum_{n\geq 0} w_n h_{n,1}^4 \Bigg)^{\!\!1/2}\! +o(1).
\end{aligned}
\end{equation}
Hence, we only need to provide sharp upper bounds for the remaining two terms, which are independent of $s_n$. Observe that
\[
  \sum_{n\geq 0} w_n e^{ikn\phi} = e^{-t}\sum_{n\geq 0}\frac{(te^{ik\phi})^n}{n!} = e^{t(e^{ik\phi}-1)}.
\]
Consequently, for $k=2$ and $k=4$, we have
\[
  \sum_{n\geq 0} w_n h_{n,k} = \Re\!\left( e^{ik\theta}e^{t(e^{ik\phi}-1)} \right) = o(1),
\]
uniformly in $\theta$, since
\(
  \big|e^{t(e^{ik\phi}-1)}\big| = e^{t(\cos(k\phi)-1)} \longrightarrow 0.
\)
The trigonometric identities $2h_{n,1}^2=1+h_{n,2}$ and $8h_{n,1}^4=3+4h_{n,2}+h_{n,4}$ thus imply that
\begin{equation}\label{eq:moments-h}
  \sum_{n\geq 0} w_n h_{n,1}^2=\frac{1}{2}+o(1), \qquad \sum_{n\geq 0} w_n h_{n,1}^4=\frac{3}{8}+o(1),
\end{equation}
uniformly in $\theta$. Therefore, using \eqref{eq - CS}, we finally find
\begin{align*}
    3e^{-t} |E(te^{i\phi})| &\leq 4\left(\frac{1}{2}+o(1)\right)^{\!1/2}\!\left(\frac{3}{8}+o(1)\right)^{\!1/2}+o(1) \\
    &= \sqrt{3}+o(1),
\end{align*}
uniformly in $\theta$. Dividing both sides by $3$ yields the desired result.
\end{proof}

We now verify that the generating functions $G$ and $H$ associated to the sequences $(b_n)$ and $(c_n)$ satisfy the decay assumption in \Cref{lem:edge}. This is achieved with the help of the following lemma, which relies almost entirely on a sectorial Phragm\'en--Lindel\"of argument. We recall that the classical sectorial Phragm\'en--Lindel\"of theorem says that if a function $f$ is holomorphic in $S_\alpha=\{|\arg z|<\pi/(2\alpha)\}$, continuous on its closure, bounded by $M$ on the boundary, and satisfies $|f(z)|=O(e^{|z|^\beta})$ for some $\beta<\alpha$, then $|f|\le M$ throughout $S_\alpha$  (see \cite[Theorem 1.4.2]{Boas}). 

We shall also use the following asymptotic form in Section~\ref{sec - 5}. If the boundary condition is imposed only outside a compact set and $\limsup_{r\to\infty}
|f(re^{\pm i\pi/2\alpha})|\le M,$ then
\[
\limsup_{r\to\infty}|f(re^{i\theta})|\le M,
\]
uniformly when \(\theta\) ranges over a compact subinterval of $\left(-\frac{\pi}{2\alpha},\frac{\pi}{2\alpha}\right).$ This exterior-sector version follows from the usual principle by applying it on a truncated sector to $\frac{w}{w+\lambda}f(w),$ with \(\lambda>0\) chosen to control the circular part of the boundary, and then letting \(|w|\to\infty\); see \cite[Section~5.63]{Titchmarsh}.

\begin{lemma}\label{lem:third-harmonic-decay}
Let $0<\phi<\pi/2$ and $q=\cos\phi$. Suppose that $E$ is entire and, for some $M>0$,
\[
 |E(w)|\leq M e^{|w|}, \qquad |e^{-2qw}E(w)|\leq M e^{|w|} \qquad(w\in\mathbb C).
\]
Then
\[
 E(te^{\pm3i\phi})=o(e^t) \qquad (t\to\infty).
\]
In particular, $E$ satisfies \eqref{eq:edge-decay}.
\end{lemma}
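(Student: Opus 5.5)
The plan is to work with the Phragmén--Lindelöf indicator of $E$ and to show that it is strictly less than $1$ in the directions $\pm 3\phi$. Since $E$ has exponential type at most $1$, its indicator
\[
 h(\psi):=\limsup_{t\to\infty}\frac{\log|E(te^{i\psi})|}{t}
\]
is finite and $2\pi$-periodic. It is also trigonometrically convex, so it is the support function of a compact convex set $K\subset\mathbb C$ (the conjugate indicator diagram):
\[
 h(\psi)=\max_{z\in K}\Re\bigl(ze^{i\psi}\bigr),
\]
as in \cite[Chapter~5]{Boas}. (If $E\equiv0$ there is nothing to prove.) The indicator bound I will establish is
\[
 h(\pm3\phi)<1 .
\]
This gives the conclusion immediately: fixing $\varepsilon>0$ with $h(\pm3\phi)+\varepsilon<1$, we get $|E(te^{\pm3i\phi})|\le e^{(h(\pm3\phi)+\varepsilon)t}$ for large $t$, which is $o(e^t)$.

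First I translate the two growth hypotheses into constraints on $K$.
\begin{itemize}
\item The bound $|E(w)|\le Me^{|w|}$ gives $h(\psi)\le 1$ for every $\psi$, and hence $K\subset\overline{\mathbb D}$.
\item The entire function $e^{-2qw}E(w)$ has indicator $h(\psi)-2q\cos\psi$, so the second hypothesis gives $h(\psi)\le 1+2q\cos\psi=\max_{|z-2q|\le1}\Re(ze^{i\psi})$ for every $\psi$. Since a compact convex set is determined by its support function, this says $K\subset\overline{D(2q,1)}$.
\end{itemize}
Hence $K$ lies in the lens $\Lambda:=\overline{\mathbb D}\cap\overline{D(2q,1)}$. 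Because $2q=2\cos\phi=e^{i\phi}+e^{-i\phi}$, the two bounding circles meet exactly at the points $e^{\pm i\phi}$. If one prefers to avoid the conjugate diagram, the same conclusion can be reached by applying the sectorial Phragmén--Lindelöf theorem to $E(w)e^{-\lambda w}$ for suitable $\lambda\in\Lambda^c$; I would use the diagram only as a bookkeeping device.

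Next, $h(3\phi)=\max_{z\in K}\Re(ze^{3i\phi})\le\max_{z\in\Lambda}\Re(ze^{3i\phi})$. On $\overline{\mathbb D}$ the value $1$ is attained only at $z=e^{-3i\phi}$. That point is not in $\overline{D(2q,1)}$, because
\[
 \bigl|e^{-3i\phi}-2\cos\phi\bigr|^2=1+4\cos\phi\,(\cos\phi-\cos3\phi)>1
\]
for $0<\phi<\pi/2$: there $\cos\phi>0$, and $\cos\phi>\cos3\phi$ since $0<\phi<3\phi<3\pi/2$ and cosine is decreasing on $[0,\pi]$ while $\cos 3\phi<0<\cos\phi$ on $(\pi/2,3\pi/2)$. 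Since $\Lambda$ is compact and the linear functional $\Re(ze^{3i\phi})$ attains the value $1$ on $\overline{\mathbb D}$ only at a point outside $\Lambda$, its maximum over $\Lambda$ is strictly less than $1$. This gives $h(3\phi)<1$. The direction $-3\phi$ is identical: the lens is symmetric under conjugation, and $e^{3i\phi}\notin\overline{D(2q,1)}$ by the same computation.

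The main obstacle I expect is justifying the step from the indicator inequalities to the containment $K\subset\Lambda$. One point to check is that the bound $h\le1+2q\cos\psi$ is genuinely the support function of $\overline{D(2q,1)}$; it is, since that support function is $\Re(2qe^{i\psi})+1$. The other is that the trigonometric convexity of indicators applies here without regularity assumptions, which is standard for functions of exponential type \cite{Boas}. Everything else is the elementary lens geometry above. The final remark, that $E$ satisfies \eqref{eq:edge-decay}, is just the $+3\phi$ case of the conclusion.
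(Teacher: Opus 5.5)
Your proof is correct, but it is organized differently from the paper's.

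\textbf{The paper's route.} It uses only the classical sectorial Phragm\'en--Lindel\"of theorem, applied directly to $J(w)=e^{-e^{-i\phi}w}E(w)$ on the sector $\phi\le\arg w\le\pi-\phi$, whose opening is $\pi-2\phi<\pi$. The first hypothesis bounds $J$ on the ray $\arg w=\phi$. The second hypothesis, via $-e^{-i\phi}=e^{i\phi}-2q$, bounds $J$ on the ray $\arg w=\pi-\phi$. This gives $|E(te^{i\theta})|\le Me^{t\cos(\theta-\phi)}$ in that sector and in its reflection. The paper then splits into two cases:
\begin{itemize}
\item for $\phi\le\pi/4$, the direction $3\phi$ lies in the sector and the exponent is $\cos2\phi$;
\item for $\phi>\pi/4$, one has $\cos3\phi<0$, and the second hypothesis alone gives the exponent $1+2q\cos3\phi$.
\end{itemize}

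\textbf{Your route.} You package both hypotheses as the containment of the conjugate indicator diagram in the lens $\Lambda=\overline{\mathbb D}\cap\overline{D(2q,1)}$. You then read off $h(\pm3\phi)<1$ by convex geometry, with no case split.

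\textbf{How they relate.} The two arguments describe the same picture. The paper's sector is exactly the set of $\psi$ for which $\max_{z\in\Lambda}\Re(ze^{i\psi})$ is attained at the corner $e^{-i\phi}$; the factor $e^{-e^{-i\phi}w}$ translates that corner to the origin. The exponents $\cos2\phi$ and $1+2q\cos3\phi$ are precisely the support function of $\Lambda$ at $3\phi$ in the two regimes. So neither route loses anything in the exponential rate.

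\textbf{Trade-offs.} Your version is more conceptual and shows at a glance that $h(\psi)<1$ for every $\psi\notin[-\phi,\phi]$ modulo $2\pi$. The price is reliance on heavier results: trigonometric convexity of indicators and the indicator diagram, which are themselves proved via Phragm\'en--Lindel\"of. Your conclusion is also purely asymptotic. The paper's argument is self-contained and gives explicit bounds valid for all $t$ with the same constant $M$, for instance $|E(te^{\pm3i\phi})|\le Me^{t\cos2\phi}$ when $\phi\le\pi/4$.
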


\begin{proof}
Set $\widetilde E(w):=e^{-2qw}E(w)$ and consider the entire functions
\[
 J(w):=e^{-e^{-i\phi}w}E(w)
\]
in the sector $\phi \leq \arg w \leq \pi-\phi$. On the first boundary ray, the growth bound for $E$ gives
\[
 |J(te^{i\phi})|=e^{-t}|E(te^{i\phi})|\leq M.
\]
Since $-e^{-i\phi}=e^{i\phi}-2q$, we also have $J(w)=e^{e^{i\phi}w}\widetilde E(w)$, and hence on the other boundary ray
\[
 |J(te^{i(\pi-\phi)})| = e^{-t}|\widetilde E(te^{i(\pi-\phi)})| \leq M.
\]
The function $J$ has finite exponential type, and the sector has opening $\pi-2\phi<\pi$. The sectorial Phragm\'en--Lindel\"of principle therefore gives $|J(w)|\leq M$ throughout the sector. Consequently,
\begin{equation}\label{eq:sector-growth}
 |E(te^{i\theta})| \leq M e^{t\cos(\theta-\phi)} \qquad(\phi\leq\theta\leq\pi-\phi).
\end{equation}
The same argument in the reflected sector, applied to $e^{-e^{i\phi}w}E(w)$, gives
\[
 |E(te^{-i\theta})| \leq M e^{t\cos(\theta-\phi)} \qquad(\phi\leq\theta\leq\pi-\phi).
\]
If $0<\phi\leq\pi/4$, then $3\phi\in[\phi,\pi-\phi]$, so these two estimates yield
\[
 |E(te^{\pm3i\phi})|\leq M e^{t\cos(2\phi)}.
\]
If $\pi/4<\phi<\pi/2$, then $\cos(3\phi)<0$, and the growth bound for $\widetilde E$ gives directly
\[
 |E(te^{\pm3i\phi})| = e^{2qt\cos(3\phi)}|\widetilde E(te^{\pm3i\phi})| \leq M e^{t(1+2q\cos(3\phi))}.
\]
In both cases the coefficient of $t$ in the exponent is strictly less than $1$, which proves the required decay.
\end{proof}

By \eqref{eq:GH} and \eqref{eq:GH-growth}, \Cref{lem:third-harmonic-decay} applies to both $E(w)=G(w)$ and $E(w)=H(w)$, with respective partners $H(-w)$ and $G(-w)$. Their coefficient sequences $(y_n)$ and $(z_n)$ are real and the limsups of their moduli are $B$ and $C$. Applying \Cref{lem:edge} and using \eqref{eq:real-symmetry} therefore gives
\begin{equation}\label{eq:Gedge}
\begin{aligned}
 \limsup_{t\to\infty}e^{-t}|G(te^{\pm i\phi})| &\leq \frac{B}{\sqrt3},\\
 \limsup_{t\to\infty}e^{-t}|H(te^{\pm i\phi})| &\leq \frac{C}{\sqrt3}.
\end{aligned}
\end{equation}

\section{A Phragm\'en--Lindel\"of argument and proof of \texorpdfstring{\Cref{theorem:main}}{Theorem 1.1}}\label{sec - 5}

If $BC=0$, the estimate $\limsup_{n\to\infty}|x_n|\leq2\sqrt{BC}$ proved in \Cref{sec - 3} already gives the result. Henceforth assume $B,C>0$. Let $\delta:=\pi/2-\phi$ and define
\begin{equation}\label{eq:Psi}
 \Psi(w):=e^{-pw}F(-ipw).
\end{equation}
This is an entire function of finite exponential type. Since $e^{i\delta}=p+iq$, the identities in \eqref{eq:EGF} give, for $t>0$,
\begin{equation}\label{eq:Psi-boundary}
\begin{aligned}
 \Psi(te^{i\delta})&=e^{-t}G(te^{-i\phi}),\\
 \Psi(te^{-i\delta})&=e^{-t}H(te^{i\phi}).
\end{aligned}
\end{equation}

We then use the following two-constants form of the Phragm\'en--Lindel\"of principle.

\begin{lemma}\label{lem:PL}
Let $0<\delta<\pi/2$, and let $f$ be holomorphic in $|\arg w|<\delta$, continuous on the boundary outside a compact set, and of exponential growth in the sector. Suppose that
\[
 \limsup_{r\to\infty}|f(re^{i\delta})|\leq M_+,
 \qquad
 \limsup_{r\to\infty}|f(re^{-i\delta})|\leq M_-,
\]
where $M_\pm>0$. Then, for $-\delta<\theta<\delta$,
\[
 \limsup_{r\to\infty}|f(re^{i\theta})|  \leq  M_+^{(\delta+\theta)/(2\delta)} M_-^{(\delta-\theta)/(2\delta)}.
\]
The estimate is uniform when $\theta$ ranges over a compact subinterval of $(-\delta,\delta)$.
\end{lemma}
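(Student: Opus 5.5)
The plan is the classical reduction to the sectorial Phragm\'en--Lindel\"of principle of \cite[Theorem 1.4.2]{Boas}, in its exterior-sector form recalled in \Cref{sec - 4}. We multiply $f$ by an auxiliary exponential-type function whose modulus interpolates the two boundary constants.

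Set $\lambda:=\frac{\log M_+-\log M_-}{2\delta}$ and $\mu:=\frac{\log M_++\log M_-}{2}$. Consider
\[
 g(w):=e^{\mu+i\lambda\log w},
\]
with the principal branch of $\log$. This is holomorphic in $|\arg w|<\delta$ and continuous up to the boundary away from $0$. For $w=re^{i\theta}$ we have
\[
 |g(w)|=e^{\mu-\lambda\theta}.
\]
Hence on the ray $\theta=\pm\delta$ we get $|g|=M_\mp^{\,?}$; more precisely $e^{\mu\mp\lambda\delta}$ equals $M_-$ at $\theta=\delta$ and $M_+$ at $\theta=-\delta$. This is the reverse of what we want, so we use $g^{-1}$ instead, or flip the sign of $\lambda$. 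Define
\[
 h(w):=f(w)\,e^{-\mu+i\lambda\log w}.
\]
Then $|h(re^{i\theta})|=|f(re^{i\theta})|\,e^{-\mu-\lambda\theta}$. At $\theta=\delta$ this factor is $e^{-\mu-\lambda\delta}=M_+^{-1}$, and at $\theta=-\delta$ it is $M_-^{-1}$. Therefore $\limsup_{r\to\infty}|h(re^{\pm i\delta})|\le1$.

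Moreover $|e^{-\mu+i\lambda\log w}|$ is bounded above and below on the sector, so $h$ is still of exponential growth. Since $2\delta<\pi$, the sector $|\arg w|<\delta$ has the form $S_\alpha$ with $\alpha=\pi/(2\delta)>1$. Exponential growth means $O(e^{|w|^{1}})$ with exponent $1<\alpha$, so the principle is applicable.

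We now apply the exterior-sector version stated before \Cref{lem:third-harmonic-decay}. It gives
\[
 \limsup_{r\to\infty}|h(re^{i\theta})|\le1
\]
uniformly for $\theta$ in compact subintervals of $(-\delta,\delta)$. Undoing the multiplier yields
\[
 \limsup_{r\to\infty}|f(re^{i\theta})|\le e^{\mu+\lambda\theta}.
\]
A direct computation shows $e^{\mu+\lambda\theta}=M_+^{(\delta+\theta)/(2\delta)}M_-^{(\delta-\theta)/(2\delta)}$, since the exponent of $\log M_+$ is $\tfrac12+\tfrac{\theta}{2\delta}$ and that of $\log M_-$ is $\tfrac12-\tfrac{\theta}{2\delta}$. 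Uniformity in $\theta$ is inherited from the exterior-sector statement, because the multiplier is continuous in $\theta$.

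The main obstacle is justifying the exterior-sector version when $f$ is only continuous on the boundary outside a compact set and only asymptotic boundary bounds hold. We handle it as indicated in the paper. Fix $\varepsilon>0$ and choose $R$ with $|h|\le1+\varepsilon$ on the rays for $r\ge R$. Work on the truncated sector $\{|\arg w|<\delta,\ |w|>R\}$, and control the circular arc $|w|=R$, where $h$ is bounded by compactness, using the factor $\frac{w}{w+\lambda'}$ with large $\lambda'$; alternatively, subtract a harmonic majorant that decays at infinity. Then let $\varepsilon\downarrow0$. We would simply cite \cite[Section~5.63]{Titchmarsh} for this step, as the paper does.
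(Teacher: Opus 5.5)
Your proof is correct and is essentially the paper's own: your multiplier $e^{-\mu+i\lambda\log w}$ coincides with the paper's factor $\frac{1}{\sqrt{M_+M_-}}\exp\bigl(i\,\frac{\log(M_+/M_-)}{2\delta}\operatorname{Log} w\bigr)$, and both arguments then conclude with the exterior-sector Phragm\'en--Lindel\"of principle. The only difference is cosmetic: the paper inserts an extra factor $e^{-\varepsilon}$ so that $|g|\le1$ holds outright on the rays beyond some radius and lets $\varepsilon\downarrow0$ at the end, while you feed the asymptotic boundary bound $1$ directly into the asymptotic form of the principle.
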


\begin{proof}
Fix $\varepsilon>0$. Using the branch of $\operatorname{Log}$ in the sector, define the function
\[
  g(w) := \frac{e^{-\varepsilon}}{\sqrt{M_+ M_-}} \exp\!\left(  i\,\frac{\log(M_+/M_-)}{2\delta}\operatorname{Log} w \right)f(w).
\]
The factor multiplying $f$ is chosen so that its modulus interpolates between $M_+^{-1}$ and $M_-^{-1}$ on the two boundary rays. Indeed, for $w=re^{i\theta}$,
\[
  |g(re^{i\theta})| = e^{-\varepsilon} M_+^{-(\delta+\theta)/(2\delta)} M_-^{-(\delta-\theta)/(2\delta)} |f(re^{i\theta})|.
\]
Hence
\[
  \limsup_{r\to\infty}|g(re^{\pm i\delta})| \le e^{-\varepsilon}<1
\]
so there exists some $R>0$ such that
\[
  |g(re^{\pm i\delta})|\le1,\qquad r\ge R,
\]
Moreover, \(g\) is holomorphic in the sector \(|\arg w|<\delta\), continuous on its boundary outside a compact set, and of exponential growth, since the modulus of the additional factor is independent of \(|w|\). As \(2\delta<\pi\), the asymptotic form of the sectorial Phragmén–Lindelöf principle therefore gives
\[
\limsup_{r\to\infty}|g(re^{i\theta})|\le1,
\qquad |\theta|<\delta,
\]
uniformly on compact subintervals. Thus
\[
  \limsup_{r\to\infty}|f(re^{i\theta})| \le e^\varepsilon M_+^{(\delta+\theta)/(2\delta)} M_-^{(\delta-\theta)/(2\delta)}.
\]
Letting $\varepsilon\downarrow0$ proves the result.
\end{proof}

Apply \Cref{lem:PL} to $\Psi$. By \eqref{eq:Gedge} and \eqref{eq:Psi-boundary}, we may take $M_+=B/\sqrt3$ and $M_-=C/\sqrt3$. Thus
\[
 \limsup_{r\to\infty}|\Psi(re^{i\theta})| \leq \frac1{\sqrt3} B^{(\delta+\theta)/(2\delta)}C^{(\delta-\theta)/(2\delta)},
\]
uniformly for $\theta$ in compact subintervals of $(-\delta,\delta)$. Since $\eta_n=\arcsin(n^{-1/4})\to0$, setting $r=n/p$ and using \eqref{eq:Psi} gives
\[
 \sup_{|\theta| \leq \eta_n} e^{-n\cos\theta}|F(-ine^{i\theta})| \leq \frac1{\sqrt3}\sqrt{BC}+o(1).
\]
By \eqref{eq:real-symmetry}, the two saddle contributions have equal modulus:
\[
 |F(ine^{-i\theta})|=|F(-ine^{i\theta})|.
\]
Therefore,
\begin{equation}\label{eq:middle-discrete}
 \sup_{|\theta|\leq\eta_n} e^{-n\cos\theta} \bigl(|F(-ine^{i\theta})|+|F(ine^{-i\theta})|\bigr) \leq \frac2{\sqrt3}\sqrt{BC}+o(1).
\end{equation}
Applying \Cref{lem:two-saddle-extraction} with $K=2\sqrt{BC}/\sqrt3$ now yields
\[
 \limsup_{n\to\infty}|x_n|\leq\frac2{\sqrt3}\sqrt{BC}.
\]
Since $x_n=a_n/\alpha^n$, this proves the inequality for real sequences. The reduction in \Cref{sec - 2} gives the complex case, and the example in the introduction proves optimality, completing the proof of
\Cref{theorem:main}.

\medskip
\noindent{\bf AI disclosure statement.}
ChatGPT 5.6 Sol by OpenAI was used to explore proof strategies for this paper and help with its writing.  All mathematical arguments were independently verified by the author, who takes full responsibility for the content.

\end{document}